\newtheorem{thm}{Theorem}[section]
\newtheorem*{thm*}{Theorem} 
\newtheorem{lem}[thm]{Lemma}
\newtheorem{Def}[thm]{Definition}
\newtheorem{prop}[thm]{Proposition}
\newtheorem{rem}[thm]{Remark}
\newcommand{\N}{\ensuremath{\mathbb{N}}}
\newcommand{\G}{\ensuremath{\mathcal{G}}}
\newcommand{\Hcal}{\ensuremath{\mathcal{H}}}
\newcommand{\del}{\ensuremath{\partial}}
\newcommand{\C}{\ensuremath{\mathbb{C}}}
\newcommand{\Q}{\ensuremath{\mathbb{Q}}}
\newcommand{\Gal}{\ensuremath{\mathrm{Gal}}}
\newcommand{\GL}{\operatorname{GL}}
\newcommand{\Aut}{\operatorname{Aut}}
\newcommand{\zu}{\hspace{-0.1cm}>\hspace{-0.15cm}}
\title{Parameterized Differential Equations over $k((t))(x)$}
\author{Annette Maier \footnote{Lehrstuhl f\"ur Mathematik (Algebra), RWTH Aachen University,
   52062 Aachen, Germany. Email: annette.maier@matha.rwth-aachen.de}
}
\date{\today}
\begin{document}
\date{\today}
\maketitle
\begin{abstract} In this article, we consider the inverse Galois problem for parameterized differential equations over $k((t))(x)$ with $k$ any field of characteristic zero and use the method of patching over fields due to Harbater and Hartmann. As an application, we prove that every connected semisimple $k((t))$-split linear algebraic group is a parameterized Galois group over $k((t))(x)$.
\end{abstract}
\textit{2010 Mathematics Subject Classiﬁcation.} 12H05, 20G15, 14H25, 34M03, 34M15, 34M50. \\
\textit{Keywords.} Parameterized Picard-Vessiot theory, Patching, Linear algebraic groups, Inverse problem.

%NOTATIONEN ETC
%Frac(R)=field of fractions for a domain R

\section*{Introduction} 
In classical Galois theory, we are given a polynomial $f$ over a field $F$ and consider the field $E$ obtained by adjoining all roots of $f$ inside an algebraic closure. The Galois group is the finite group of all automorphisms of $E$ that act trivially on $F$. The inverse problem asks which finite groups are Galois groups over a given field $F$. For example over $\Q$ this is still an open problem. In differential Galois theory, we start with a linear differential equation over a differential field $F$ and look at the automorphisms of the field obtained by adjoining a complete set of solutions that act trivially on $F$ and commute with the derivation. This group measures the algebraic relations among the solutions. Parameterized differential Galois theory is a refinement of differential Galois theory where the Galois group measures algebraic relations as well as the $\del_t$-algebraic relations among the solutions, if the base field $F$ is equipped with an additional derivation $\del_t$ depending on a parameter $t$. \\
\\
Let $k$ be a field of characteristic zero and define $F=k((t))(x)$ with the two natural derivations $\del_x$ and $\del_t$. Consider a linear differential equation $\del_x(y)=Ay$ for some $A \in F^{n\times n}$. The Galois theory of parameterized differential equations assigns a parameterized Galois group to this equation which can be identified with a linear differential group $\G \leq \GL_n$ defined over $k((t))$. That means that $\G\leq \GL_n$ is given by $\del_t$-differential polynomials in the coordinates of $\GL_n$ over $k((t))$. The inverse Galois problem in this situation asks which linear differential groups defined over $k((t))$ can be obtained as parameterized Galois groups of some differential equations. \\
\\
So far, the inverse Galois problem for parameterized differential equations has only been considered over $U(x)$ where $U$ is equipped with a derivation $\del_t$ (or more generally with several commuting derivations $\del_{t_1},\dots,\del_{t_r}$) and is differentially closed or even stronger, a universal differential field. This means that for any differential field $L\subseteq U$ which is finitely differentially generated over $\Q$, any differentially finitely generated extension of $L$ can be embedded into $U$. Over such a field $U(x)$, the following necessary (\cite{Dreyfus}) and sufficient (\cite{MR2975151}) condition was recently found: A linear differential group $\G$ defined over $U$ is a parameterized Galois group if and only if $\G$ is differentially finitely generated. That is, there are finitely many elements $g_1,\dots, g_m \in \G(U)$ such that $\G(U)$ is the smallest differentially closed subgroup of $\GL_n(U)$ containing them. In the special case of $\G$ a linear algebraic group over $U$, Singer then showed that $\G$ is differentially finitely generated if and only if the identity component $\G^\circ$ has no quotient isomorphic to the additive group $\mathbb{G}_a$ or the multiplicative group $\mathbb{G}_m$ (\cite{MR2995020}). This implies in particular that every semisimple linear algebraic group defined over $U$ is a parameterized Galois group over $U(x)$. \\
\\
Over fields $U(x)$ with $U$ not differentially closed, not much is known on the inverse problem. We restrict ourselves to the base field $F=k((t))(x)$ as above. This is the function field of a curve over a complete discretely valued field. Over such a field we can apply the method of patching over fields due to Harbater and Hartmann (see Theorem \ref{patchingarithmcurves}). This method has been applied by Harbater and Hartmann to (non-parameterized) differential Galois theory (see \cite{HHOW}). We give an application of patching to parameterized Galois theory (Theorem \ref{thmpatching}) which states that in order to have the existence of a linear differential equation over $F$ with Galois group a given group $\mathcal{G}$, it is sufficient to construct $r$ linear differential equations over certain overfields $F_1,\dots,F_r$ with certain properties and Galois groups $\G_1,\dots,\G_r$ such that $\G_1,\dots,\G_r$ generate $\G$ as a linear differential group. This allows to break down the problem into smaller pieces. If $\G$ is a $k((t))$-split semisimple linear algebraic group, we show that $\G$ can be differentially generated by suitable subgroups $\G_1,\dots,\G_r$ of its root subgroups (see Proposition \ref{propgen}) by using a theorem of Cassidy that classifies Zarisiki-dense linear differential subgroups of $\G$ (see \cite[Thm. 19]{Cassidyclassification}). By realizing these groups $\G_1,\dots,\G_r$ as parameterized Galois groups over suitable overfields $F_1,\dots,F_r$, we then prove our main result (Theorem \ref{result}):
\begin{thm*}\nonumber
Let $\Hcal \leq \GL_n$ be a connected semisimple linear algebraic group defined and split over $k((t))$. Then $\Hcal$ is the parameterized Galois group of some $n$-dimensional $\del_x$-differential equation over $k((t))(x)$. 
\end{thm*}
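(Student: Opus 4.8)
The plan is to assemble three ingredients: the patching criterion of Theorem~\ref{thmpatching}, the differential generation result of Proposition~\ref{propgen}, and an explicit realization of differential subgroups of a single root subgroup as parameterized Galois groups. Since $\Hcal$ is connected, semisimple and split over $k((t))$, it is generated as an algebraic group by the root subgroups $U_\alpha \cong \mathbb{G}_a$ attached to a fixed maximal $k((t))$-split torus. First I would invoke Proposition~\ref{propgen} to obtain finitely many differential subgroups $\G_1, \dots, \G_r$, each contained in one of these root subgroups, whose smallest common Kolchin-closed overgroup is all of $\Hcal$. The mechanism behind this is Cassidy's classification (\cite{Cassidyclassification}): a proper Zariski-dense differential subgroup of $\Hcal$ is severely constrained, its traces on the root subgroups being governed by a common pattern of linear $\del_t$-operators dictated by the root system, so choosing the $\G_i$ to violate this pattern on a generating set of roots excludes every proper differential overgroup, forcing the $\G_i$ to differentially generate $\Hcal$.

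The main step is to realize each $\G_i$ as a parameterized Galois group over an overfield $F_i$ of $F = k((t))(x)$ of the kind required by the patching setup. Each such $\G_i$ is a differential subgroup of $\mathbb{G}_a$, hence of the form $\ker(L_i)$ for a linear differential operator $L_i \in k((t))[\del_t]$. The natural candidate is the inhomogeneous equation $\del_x(y) = a_i$ over $F_i$ (equivalently a two-dimensional unipotent $\del_x$-system), whose parameterized Galois group is the kernel of the minimal operator $L$ with $L(a_i) \in \del_x(F_i)$. I would therefore construct $a_i \in F_i$ with $L_i(a_i) \in \del_x(F_i)$ but with no proper left divisor of $L_i$ sharing this property; this pins the Galois group down to $\ker(L_i) = \G_i$. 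Once the overfields $F_i$ are fixed, namely completions attached to suitable distinct points or branches of the closed fiber of the model $\mathbb{P}^1_{k[[t]]}$, such as fields of the form $k((t))((x-c_i))$, this is a direct computation, using that these fields are large enough both to exhibit the integral $L_i(a_i) = \del_x(b_i)$ and to avoid spurious lower-order $\del_t$-relations.

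It then remains to collect the data and apply patching. I would arrange the supporting points $c_i$ to be distinct so that the local pictures are disjoint and the hypotheses of Theorem~\ref{thmpatching} hold, embed each local realization into $\GL_n$ through the chosen representation of $\Hcal$, and feed the tuple $(F_i, \G_i)$ into Theorem~\ref{thmpatching}. Because the $\G_i$ differentially generate $\Hcal$, the conclusion is an $n$-dimensional $\del_x$-differential equation over $F$ whose parameterized Galois group is exactly $\Hcal$, which is Theorem~\ref{result}.

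The hard part will be the realization step. Over a differentially closed (or universal) field the realization of $\ker(L_i) \le \mathbb{G}_a$ is standard (\cite{MR2975151}), but here one must exhibit $a_i$ over the concrete patching field $F_i$ whose associated unipotent equation has parameterized Galois group \emph{precisely} $\ker(L_i)$: neither too large, which requires ruling out every $\del_t$-relation among the integrals of order lower than $L_i$, nor too small. Controlling exactly these relations over a field that is far from differentially closed is the delicate point, and it is precisely where the $k((t))$-split hypothesis is used, since it makes all root subgroups defined over $k((t))$ and reduces the entire construction to the additive group.
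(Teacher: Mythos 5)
Your proposal follows the paper's proof essentially step for step: realize differential subgroups of the root subgroups $U_{\alpha}\cong\mathbb{G}_a$ over the local patches via inhomogeneous unipotent equations $\del_x(y)=a_i$, then combine the patching criterion of Theorem \ref{thmpatching} with the generation criterion of Proposition \ref{propgen}. The one ingredient you defer to a ``direct computation'' --- exhibiting $a_i\in F_i$ with $L_i(a_i)\in\del_x(F_i)$ but no lower-order relation --- is exactly what the paper supplies, namely the explicit antiderivative $f_i=\sum_{n\geq 1}\tfrac{(-1)^{n+1}}{n(x-q_i)^n}t^n\in F_0\setminus F_i$ (non-membership in $F_i$ coming from Lemma \ref{lemmaschnitt}), together with the specific choice of the $\G_i$ as the constant subgroups of $U_{\pm\alpha}$ and their $t^{\pm 1}$-twists so that the hypotheses of Proposition \ref{propgen} hold with $f_\alpha=t$.
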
 
This paper is organized as follows. In Section \ref{1}, we provide some background on the Galois theory of parameterized differential equations. In Section \ref{secpat}, we present the method of patching as established by Harbater and Hartmann and give an application to parameterized Galois theory. Section \ref{3} deals with how to differentially generate a semisimple linear algebraic group. The main theorem is then proven in Section \ref{4}.\\ 
\\
\noindent \textbf{Acknowledgments.} I would like to thank Julia Hartmann and Michael Wibmer for helpful discussions during the preparation of this manuscript.

\section{Parameterized differential equations}\label{1}
All fields are assumed to be of characteristic zero and all rings are assumed to contain $\Q$.
A ${\del_t}\del$-ring $R$ is a ring $R$ with two commuting derivations $\del$ and $\del_t$. Examples of such rings are $\C[t][x]$, $\C[[t]][x]$, $\C(t)(x)$, $\C((t))(x)$. A ${\del_t}\del$-field is a ${\del_t}\del$-ring that is a field. Homomorphisms of ${\del_t}\del$-rings are homomorphisms commuting with the derivations, ${\del_t}\del$-ideals are ideals stable under the derivations and ${\del_t}\del$-ring extensions are ring extensions with compatible ${\del_t}\del$-structures. The $\del$-constants $C_R$ are the elements of a ${\del_t}\del$-ring $R$ mapped to zero by $\del$. A linear $\del$-equation $\del(y)=Ay$ with a matrix $A \in F^{n\times n}$ over a ${\del_t}\del$-field $F$ is also called a parameterized (linear) differential equation to emphasize the extra structure ${\del_t}$ on $F$. A fundamental solution matrix for $A$ is an element $Y \in \GL_n(R)$ for some ${\del_t}\del$-ring $R/F$ such that $\del(Y)=AY$ holds. 
\begin{Def}
Let $\del(y)=Ay$ be a parameterized differential equation over a ${\del_t}\del$-field $F$. A \emph{parameterized Picard-Vessiot extension} for $A$, or PPV-extension for short, is a ${\del_t}\del$-field extension $E$ of $F$ such that 
\begin{enumerate}
 \item There exists a fundamental solution matrix $Y \in \GL_n(E)$ such that ${E=F\<Y\zu_{\del_t}}$ which means that $E$ is generated as a field over $F$ by the coordinates of $Y$ and all its higher derivatives with respect to ${\del_t}$. 
\item $C_E=C_F$.
\end{enumerate}
A \emph{parameterized Picard-Vessiot ring} for $A$, or PPV-ring for short, is a ${\del_t}\del$-ring extension $R/F$ such that 
\begin{enumerate}
\item There exists a fundamental solution matrix $Y \in \GL_n(R)$ such that ${R=F\{Y,Y^{-1}\}_{\del_t}}$, i.e., $R$ is generated as an $F$-algebra by the coordinates of $Y$ and $\det(Y)^{-1}$ and all their higher $\del_t$-derivatives.
\item $C_R=C_F$.
\item $R$ is $\del$-simple, i.e. $R$ has no nontrivial $\del$-invariant ideals. 
\end{enumerate}
\end{Def}
A Picard-Vessiot ring for $A$ always exists if $F^\del$ is algebraically closed, see \cite{Wibparam}. Every PPV-extension contains a unique PPV-ring. Indeed, let $E$ be a PPV-extension with fundamental solution matrix $Y \in \GL_n(E)$. Then $R:=F\{Y,Y^{-1}\}_{\del_t}$ is a PPV-ring for $M$. (It is not obvious that $R$ is $\del$-simple, though. This can be shown by writing $R$ as an infinite union of non-parameterized Picard-Vessiot rings corresponding to prolongations of $A$ and then applying the corresponding statement from the non-parameterized Picard-Vessiot theory.) Uniqueness follows from the fact that if $Y' \in \GL_n(E)$ is a fundamental solution matrix for $M$, then there exists a $B \in \GL_n(C_E)\subseteq \GL_n(F)$ with $Y'=YB$, hence ${F\{Y',Y'^{-1}\}_{\del_t}=F\{Y,Y^{-1}\}_{\del_t}}$. 
\begin{Def}
Let $\del(y)=Ay$ be a parameterized differential equation over a ${\del_t}\del$-field $F$ such that there exists a PPV-ring $R$ for $A$. Denote $C=C_F$. Then the \emph{Galois group of $A$} (with respect to $R$) is the group functor 
\[ \Gal(A) \colon \underline{{\del_t}\hspace{-0.14cm}-\hspace{-0.11cm}C\hspace{-0.14cm}-\hspace{-0.14cm}\text{algebras}}\to \underline{\text{Groups}}, \ S \mapsto \operatorname{Aut}^{{\del_t}\del}(R\otimes_C S / F \otimes _C S), \] where $\operatorname{Aut}^{{\del_t}\del}(R\otimes_C S / F \otimes _C S)$ denotes the ${\del_t}\del$-compatible automorphisms of $R\otimes_C S$ (which is considered as a ${\del_t}\del$-ring via $\del|_S=0$) that act trivially on $F\otimes_C S$.
\end{Def}

% A differential field $(K,\del_t)$ is called \emph{differentially closed} if any set of $\del_t$-polynomial equations that has a solution in some $\del_t$-extension field of $K$ also has a solution inside $K$. An imporant fact is that every differential field is contained in a differentially closed field. 
Let $(C,\del_t)$ be a differential field. A \emph{linear $\del_t$-group over $C$} is a group functor $\G \colon \underline{{\del_t}\hspace{-0.14cm}-\hspace{-0.11cm}C\hspace{-0.14cm}-\hspace{-0.14cm}\text{algebras}}\to \underline{\text{Groups}}$ such that there exists an $n \in \N$ and a set $\{p_1,\dots,p_m\}\subseteq C[\del_t^{k}(X_{ij}) \ | \ k \in \N_{\geq 0}, 1 \leq i,j \leq n]$ of $\del_t$-polynomials in $n^2$ variables with coefficients in $C$ such that for all $\del_t$-algebras $S$ over $C$: ${\G(S)=\{g \in \GL_n(S) \ | \ p_i(g)=0, \ i=1,\dots, m \}}$.

\begin{thm}\label{thmgalois}
Let $F$ be a $\del_t\del$-field and let $A \in F^{n\times n}$. Assume that there exists a PPV-ring $R$ for the parameterized differential equation $\del(y)=Ay$. Denote $C=C_F$. Then the following holds.
\begin{enumerate}[a)]
\item \label{galois1} $\Gal(A)$ becomes a linear ${\del_t}$-group over $C$ via the following natural embedding $\Gal(A) \leq \GL_n$ depending on a fixed fundamental solution matrix $Y \in \GL_n(R)$:
%\Gal(M)\hookrightarrow \GL_n ist dann ${\del_t}$-abg. EInbettung, d.h. Abbildung auf Hopfalgebren surjektiv. Ist das gleicht wie Def. ueber S, aber nicht-trivial zz (vgl. Michael etc fuer den Differenzenfall). Dass \theta Morphismus ist, folgt automatisch, da funktoriell. 
\[\theta_S\colon\operatorname{Aut}^{{\del_t}\del}(R\otimes_C S / F \otimes _C S)\hookrightarrow \GL_n(S), \ \sigma \mapsto (Y\otimes 1)^{-1}\sigma(Y\otimes 1).\] 
Let $J$ be the kernel of the ${\del_t}$-$F$-homomorphism $F\{X,X^{-1}\}_{\del_t} \to R$ given by $X \mapsto Y$, where $X$ consists of $n^2$ ${\del_t}$-differentially-independent variables. Then the image of $\theta_S$ equals $$\{g \in \GL_n(S) \ | \ f(Yg)=0 \text{ for all } f \in J\}.$$  
\item\label{galois3} Let $\frac{a}{b}$ be an element of the field of fractions of $R$ (note that $R$ is a domain since it is $\del$-simple). If $\frac{a}{b}$ is functorially invariant under the action of $\Gal(A)$, i.e., for every ${\del_t}$-$C$-algebra $S$ and every \\ $\sigma \in \Aut^{{\del_t}\del}(R\otimes_C S/ F\otimes_C S)$ we have 
\[\sigma(a\otimes_C 1)\cdot(b\otimes_C 1)=(a \otimes_C 1)\cdot \sigma(b\otimes_C 1), \] then $\frac{a}{b}$ is contained in $F$. 
\end{enumerate}
\end{thm}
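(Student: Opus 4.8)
The plan is to establish part \ref{galois1} first and then deduce part \ref{galois3} from it.

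For \ref{galois1}, I first check that $\theta_S$ is well defined, i.e.\ that it lands in $\GL_n(S)$ rather than merely in $\GL_n(R\otimes_C S)$. Since $\sigma$ fixes $F\otimes_C S$ and commutes with $\del$, both $Y\otimes 1$ and $\sigma(Y\otimes 1)$ satisfy $\del(Z)=(A\otimes 1)Z$, and a direct computation then gives $\del\big((Y\otimes 1)^{-1}\sigma(Y\otimes 1)\big)=0$, so the entries of $\theta_S(\sigma)$ are $\del$-constants. The crucial point is that these constants lie in $S$: because $C=C_F$ is a field, every $\del_t$-$C$-algebra $S$ is free as a $C$-module, and choosing a $C$-basis shows $(R\otimes_C S)^\del=R^\del\otimes_C S=C\otimes_C S=S$. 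Functoriality in $S$ is routine, and $\theta_S$ is a homomorphism because $Y^{-1}\tau(Y)$ has constant entries and is therefore fixed by $\sigma$; injectivity follows since $\theta_S(\sigma)=\Id$ forces $\sigma$ to fix the generators $Y\otimes 1$ of $R\otimes_C S$.

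Next I compute the image. The inclusion ``$\subseteq$'' is immediate: writing $\sigma(Y\otimes 1)=(Y\otimes 1)g$ with $g=\theta_S(\sigma)$, for every $f\in J$ one has $0=\sigma(f(Y\otimes 1))=f((Y\otimes 1)g)$, i.e.\ $f(Yg)=0$. Conversely, given $g\in\GL_n(S)$ with $f(Yg)=0$ for all $f\in J$, I use the presentation $R\otimes_C S=(F\otimes_C S)\{X,X^{-1}\}_{\del_t}/(J)$ to define a $\del_t$-$(F\otimes_C S)$-homomorphism by $X\mapsto Xg$; it is well defined precisely because $f(Yg)=0$, it commutes with $\del$ by the same computation as above, and it is invertible (use $g^{-1}$), hence an element of $\Gal(A)(S)$ mapping to $g$. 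The remaining and genuinely technical step is to exhibit the image as the zero set of $\del_t$-polynomials with coefficients in $C$. For this I consider the $\del_t\del$-ideal $\mathfrak n=(f(YZ)\mid f\in J)$ in $R\{Z,Z^{-1}\}_{\del_t}=R\otimes_C C\{Z,Z^{-1}\}_{\del_t}$ and show, using the $\del$-simplicity of $R$ together with the constants lemma above, that $\mathfrak n$ is generated by $\mathfrak n\cap C\{Z,Z^{-1}\}_{\del_t}$; this intersection supplies the desired $\del_t$-polynomials over $C$. I expect this descent of the field of definition from $R$ down to the constants $C$ to be the main obstacle, as it is the parameterized analogue of the torsor theorem of unparameterized Picard--Vessiot theory.

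For \ref{galois3}, I produce a single ``generic'' automorphism through which invariance already forces $\frac{a}{b}\in F$. Put $R_0=(R\otimes_F R)^\del$; by the constants lemma this is a $\del_t$-$C$-algebra. Both $Y\otimes 1$ and $1\otimes Y$ are fundamental solution matrices in the $\del_t\del$-ring $R\otimes_F R$, so $Z:=(Y\otimes 1)^{-1}(1\otimes Y)\in\GL_n(R_0)$, and since $f(1\otimes Y)=1\otimes f(Y)=0$ for $f\in J$, part \ref{galois1} yields an automorphism $\sigma\in\Gal(A)(R_0)$ with $\sigma(Y\otimes 1)=(Y\otimes 1)Z$. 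Let $\iota\colon R\otimes_C R_0\to R\otimes_F R$ be the ring homomorphism $r\otimes\xi\mapsto(r\otimes 1)\xi$; then $\iota\circ\sigma$ is a $\del_t$-$F$-homomorphism sending $Y\otimes 1$ to $(Y\otimes 1)Z=1\otimes Y$, so $\iota(\sigma(a\otimes 1))=1\otimes a$ for every $a\in R$. Applying $\iota$ to the invariance identity $\sigma(a\otimes 1)\,(b\otimes 1)=(a\otimes 1)\,\sigma(b\otimes 1)$ gives $b\otimes a=a\otimes b$ in $R\otimes_F R$. Finally, mapping into $\Quot(R)\otimes_F\Quot(R)$ and dividing by the invertible element $b\otimes b$ yields $\tfrac{a}{b}\otimes 1=1\otimes\tfrac{a}{b}$; since $c\otimes 1=1\otimes c$ in $L\otimes_F L$ for a field extension $L/F$ forces $c\in F$ (otherwise $1$ and $c$ extend to an $F$-basis of $L$ and the two sides are distinct basis vectors of $L\otimes_F L$), we conclude $\frac{a}{b}\in F$.
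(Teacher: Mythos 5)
Your part~a) follows essentially the same route as the paper: well-definedness and injectivity of $\theta_S$ via the constancy of $(Y\otimes 1)^{-1}\sigma(Y\otimes 1)$ (your observation that $(R\otimes_C S)^{\del}=S$ because $S$ is free over the field $C$ and $R^{\del}=C$ is correct and is what the paper leaves implicit), and the image computation via the presentation $R\otimes_C S\cong (F\otimes_C S)\{X,X^{-1}\}_{\del_t}/J_S$ together with the substitution $X\mapsto Xg$. The one step the paper does not prove but cites from \cite[Lemma 8.2]{Gilletetc} --- that the defining $\del_t$-equations can be taken over $C$ rather than over the field generated by the entries of $Y$ --- is exactly the step you isolate; your plan of showing that the $\del_t\del$-ideal $\mathfrak n=(f(YZ)\mid f\in J)$ of $R\{Z,Z^{-1}\}_{\del_t}=R\otimes_C C\{Z,Z^{-1}\}_{\del_t}$ is generated by $\mathfrak n\cap C\{Z,Z^{-1}\}_{\del_t}$, using $\del$-simplicity of $R$, is the standard argument and would complete a), though as written it remains a sketch.

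Part~b) is where you genuinely diverge: the paper simply invokes the parameterized Galois correspondence (\cite[Prop.~8.5]{Gilletetc}), whereas you give a direct, self-contained argument with the generic automorphism coming from $Z=(Y\otimes 1)^{-1}(1\otimes Y)\in\GL_n(R_0)$, $R_0=(R\otimes_F R)^{\del}$. This is an attractive alternative, but it has one gap. To obtain $\sigma\in\Gal(A)(R_0)$ from part~a) with $g=Z$ you must verify $f\bigl((Y\otimes_C 1)Z\bigr)=0$ \emph{in} $R\otimes_C R_0$ for all $f\in J$; what you actually check is that the image of this element under $\iota\colon R\otimes_C R_0\to R\otimes_F R$ equals $f(1\otimes_F Y)=0$. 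Since $\iota$ is not a priori injective, this does not yet produce $\sigma$. The missing ingredient is again the $\del$-simplicity lemma: every $\del$-stable ideal of $R\otimes_C D$, for $D$ a $C$-algebra with trivial $\del$, is generated by its intersection with $1\otimes D$; applied to $\ker\iota$, which is a $\del$-ideal meeting $1\otimes R_0$ trivially, it gives that $\iota$ is injective (in fact an isomorphism), after which your argument goes through. Conveniently, this is the same lemma you need for the descent step in a), so it should be stated and proved once and used twice. The remaining steps of b) --- deducing $b\otimes a=a\otimes b$ in $R\otimes_F R$, passing to $\Quot(R)\otimes_F\Quot(R)$ and dividing by $b\otimes b$, and concluding from $c\otimes 1=1\otimes c$ that $c\in F$ --- are all correct.
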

\begin{proof}
 It is easy to see that $\theta_S$ is a well-defined and injective group homomorphism for every $S$. To see that $\operatorname{Im}(\theta_S)=\{g \in \GL_n(S) \ | \ f(Yg)=0 \text{ for all } f \in J\}$, note that $R$ is isomorphic to $F\{X,X^{-1}\}_{\del_t}/J$ as a ${\del_t}$-ring. We extend $\del$ to $F\{X,X^{-1}\}_{\del_t}$ via $\del(X)=AX$. Then $J$ is a ${\del_t}\del$-ideal and $R\cong~ F\{X,X^{-1}\}_{\del_t}/J$ as ${\del_t}\del$-rings. Let $J_S \subseteq (F\otimes_C S)\{X,X^{-1}\}_{\del_t}$ be the ideal generated by $J$. Note that $J_S$ is a ${\del_t}$-ideal, since $J$ and $S$ are closed under $S$. Then $$R\otimes_C S \cong (F\{X,X^{-1}\}_{\del_t}/J)\otimes_C S\cong (F\otimes_C S)\{X,X^{-1}\}_{\del_t}/J_S.$$ Now
$X\mapsto X\cdot g$ defines a ${\del_t}\del$-$(F\otimes_C S)$-automorphism of $(F\otimes_C S)\{X,X^{-1}\}_{\del_t}$ for every $g \in \GL_n(S)$.  Hence $g \in \GL_n(S)$ induces a ${\del_t}\del$-automorphism on $R\otimes_CS$ if and only if it leaves $J_S=\{f\in S\{X,X^{-1}\}_{\del_t} \ | \ f(Y)=0\}$ invariant. We conclude $\theta_S(\Gal(A)(S))=\{g \in \GL_n(S) \ | \ f(Yg)=0 \text{ for all } f \in J_S\}=\{g \in \GL_n(S) \ | \ f(Yg)=0 \text{ for all } f \in J\},$ since $J$ generates $J_S$. In particular, $\Gal(A)$ is a linear $\del_t$-group defined over $F\<Y\zu_{\del_t}$. See \cite[Lemma 8.2]{Gilletetc} for a proof that it is in fact defined over $C$.\\
The second statement follows from the Galois correspondence of parameterized differential modules (\cite[Proposition 8.5]{Gilletetc}).
%In GGO sind die Invarianten etwas anders definiert, kriegt man aber aus meiner Def. heraus wenn man als S die Hopfalgebra waehlt. 
\end{proof}
Whenever we write $\Gal(A)\leq \GL_n$, this is understood to be with respect to a fundamental solution matrix that was fixed beforehand. (Another choice of a fundamental solution matrix inside the same Picard-Vessiot ring would yield a conjugate image inside $\GL_n$).

\section{Patching parameterized differential equations}\label{secpat}
Patching over fields is a method which was established by Harbater and Hartmann in \cite{HH} and which has been applied to differential modules (see \cite{HHOW}). We give a related application of patching to parameterized differential modules in Theorem \ref{thmpatching}. The method of patching can be applied over fields of transcendence degree one over complete discretely valued fields. We restrict ourselves to the situation $F=k((t))(x)$ for a field $k$ of $\operatorname{char}(k)=0$. We fix pairwise distinct elements $q_1,\dots,q_r \in k$ and define fields $F_0$ and $F_i, F_i^\circ$ for $1 \leq i \leq r$ as follows:\\
\\
{\underline{Setup:} \vspace{-0.8cm}
\begin{eqnarray*}
F&=& k((t))(x) \\
F_0&=&\operatorname{Frac}(k[(x-q_1)^{-1},\dots,(x-q_r)^{-1}][[t]])\\
F_i&=&\operatorname{Frac}(k[[t]][[x-q_i]]) \\
F_i^\circ&=&k((x-q_i))((t)).
\end{eqnarray*}}
Note that $k[[t]][[x-q_i]]=k[[x-q_i]][[t]]$, hence $F \subseteq F_i \subseteq F_i^\circ$ for each $1 \leq i \leq r$. Also, $F\subseteq F_0$ and $F_0\subseteq k(x)((t))\subseteq F_i^\circ$ for each $i$, hence we have a diagram of fields $F \subseteq F_0, F_i \subseteq F_i^\circ$ for each $1\leq i \leq r$. 

\begin{thm}[Harbater-Hartmann]\label{patchingarithmcurves} $\text{}$ \\ \vspace{-0.6cm}
\begin{enumerate}
 \item Let $x \in F_0$ be such that for each $1\leq i \leq r$, $x$ is contained in $F_i$ (when considered as an element inside $F_i^\circ$). Then $x \in F$. 
\item Let $n \in \N$ and $Y_i \in \GL_n(F_i^\circ)$ for $1\leq i \leq r$. Then these matrices can be simultaneously factored as follows: There exist matrices $Z_i \in \GL_n(F_i)$ for $1\leq i \leq r$ and one matrix $Y \in \GL_n(F_0)$ such that $Y_i=Z_i^{-1}Y$ holds for each $1 \leq i \leq r$. 
\end{enumerate}
\end{thm}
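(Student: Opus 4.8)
The statement is the Harbater-Hartmann patching theorem specialized to the arithmetic surface $\mathbb{P}^1$ over the complete discrete valuation ring $k[[t]]$, whose closed fibre is $\mathbb{P}^1_k$. Under this dictionary $F$ is the function field of the generic fibre, $F_0$ is attached to the open patch $U_0=\mathbb{P}^1_k\setminus\{q_1,\dots,q_r\}$, each $F_i$ is attached to the closed point $P_i=(q_i,0)$, and $F_i^\circ$ is the field of the branch of the closed fibre at $P_i$ lying on $U_0$, so that $F_0,F_i\hookrightarrow F_i^\circ$. I would therefore deduce the statement from the general patching results of Harbater and Hartmann; the two assertions are exactly their two patching axioms, and both are organized around the Mayer-Vietoris sequence $0\to F\to F_0\oplus\bigoplus_i F_i\to\bigoplus_i F_i^\circ$ for the cover $\{U_0,P_1,\dots,P_r\}$ of the closed fibre, whose final map sends $(f_0,(f_i)_i)$ to the branch differences $(f_0-f_i)_i$ and which is upgraded $t$-adically using completeness.

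For part (1) the inclusion $F\subseteq F_0\cap\bigcap_i F_i$ is built into the setup, so only the reverse inclusion is at issue, and it is precisely left-exactness of this sequence: an element lying in $F_0$ and in every $F_i$ is a formal function on a neighbourhood of $U_0$ and on a neighbourhood of each $P_i$ that agree on the overlapping branches, hence glues to a function on a neighbourhood of the entire closed fibre. Using $t$-adic completeness of the defining rings together with properness of $\mathbb{P}^1_k$, one identifies the glued object with an element of $F=k((t))(x)$. This separation step is the softer of the two.

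Part (2), the simultaneous factorization, is the real content, and I would prove it by bootstrapping the additive version of the sequence --- surjectivity of the last map, i.e. a splitting $F_i^\circ=F_0+F_i$ and its matrix analogue --- to the multiplicative statement by successive approximation. First one reduces to the case $Y_i\equiv I$ modulo a power of $t$ by a preliminary factorization along the closed fibre at the points $q_i$, of Cartan-Birkhoff type. Then, writing each $Y_i$ as $I$ plus an error, one splits the error additively into an $F_0$-part and an $F_i$-part, absorbs these into provisional factors $Y$ and $Z_i$, and observes that the residual error is pushed into a strictly higher power of $t$. Iterating and invoking $t$-adic completeness of the rings defining $F_0$ and the $F_i$, the provisional factors converge to genuine matrices $Z_i\in\GL_n(F_i)$ and $Y\in\GL_n(F_0)$ with $Y_i=Z_i^{-1}Y$.

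The main obstacle is exactly this factorization. Two points make it delicate: the corrections must be assembled into a single global matrix $Y\in\GL_n(F_0)$ that serves all $r$ branches at once, which forces the additive splitting to be a genuinely global statement on the closed fibre $\mathbb{P}^1_k$ rather than $r$ independent local factorizations; and because the iteration is non-commutative one must verify that the successive products actually converge $t$-adically. These are the technical heart of ``Patching over fields'', whereas part (1) and the reductions in part (2) are comparatively formal once completeness and the Mayer-Vietoris input are in hand.
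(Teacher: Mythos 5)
Your proposal follows essentially the same route as the paper: both identify the setup with the Harbater--Hartmann framework for $\hat X=\mathbb{P}^1_{k[[t]]}$ with $S=\{Q_1,\dots,Q_r\}$, match the fields $F, F_0, F_i, F_i^\circ$ with the fields $F_U, F_{U'}, F_{P}, F_\wp$ of that theory, and then invoke their general results (the paper cites \cite[Thm.~5.10]{HH} together with \cite[Prop.~2.2]{HHKtorsor} for part (2) and \cite[Prop.~6.3]{HH} for part (1)). Your additional sketch of how the factorization itself is proved (additive splitting plus $t$-adic successive approximation) is an accurate account of the cited machinery, but the paper does not reprove it.
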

\begin{proof}
We explain how this can be obtained from Theorem 5.10 in \cite{HH}. Set $T=k[[t]]$, $\hat X=\mathbb{P}^1_T$ and let $S$ be the set consisting of the points $Q_1,\dots,Q_r$ on $X=\mathbb{P}^1_k$ given by $q_1,\dots,q_r \in k$. Then the local ring of $\hat X$ at $Q_i$ is $k[[t]][x]_{(t,x-q_i)}$ with $(t,x-q_i)$-adic completion $\hat R_{i}=k[[t]][[x-q_i]]$. Hence $\operatorname{Frac}(\hat{R_i})=F_i$ for $1\leq i \leq r$. The $t$-adic completion $\hat R_i^\circ$ of the localization of $\hat{R_i}$ at the height one prime $t\hat{R_i}$ equals $k((x-q_i))[[t]]$ with fraction field $k((x-q_i))((t))=F_i^\circ$. Hence in this special setup, the fields $F_i$, $F_i^\circ$ as defined above coincide with the fields $F_i=\operatorname{Frac}(\hat R_i)$, $F_i^\circ=\operatorname{Frac}(\hat R_i^\circ)$ as defined in \cite{HH}.\\ Define further $U=X$ and $U'=X\backslash S$. Then the ring $R_{U'}\subseteq F$ of rational functions regular on $U'$ equals $\mathcal{S}^{-1}(k[[t]][(x-q_1)^{-1},\dots,(x-q_r)^{-1}])$, where $\mathcal{S}$ denotes all elements that are units modulo $t$. Its $t$-adic completion $\hat R_{U'}$ equals ${k[(x-q_1)^{-1},\dots,(x-q_r)^{-1}][[t]]}$ (compare the argument in \cite{HH} in the example following Theorem 5.9 on page 85-86). Hence with the notation as in \cite{HH}, $F_{U'}=F_0$ and $F_U=F$. \\
Therefore, \cite[Thm. 5.10]{HH} implies that there is an equivalence of categories 
\[\operatorname{Vect}(F) \rightarrow \prod \limits_{i=1}^r \operatorname{Vect}(F_i) \times_{\prod \limits_{i=1}^r \operatorname{Vect}(F_i^\circ)} \operatorname{Vect}(F_0), \] which implies Part (b) of this theorem (see for example \cite[Prop. 2.2]{HHKtorsor}). Part (a) follows from \cite[Prop. 6.3]{HH} with again $S=\{Q_1,\dots,Q_r\}$ and now $U=X\backslash S$.
\end{proof}

Let now $\del=\frac{\del}{\del x}$ and $\del_t=\frac{\del}{\del t}$ be the natural derivations on $F$. Note that $\del$ and $\del_t$ extend to all fields $F_i$, $F_i^\circ$ and $F_0$ compatibly with the inclusions $F \subseteq F_i, F_0 \subseteq F_i^\circ$. Also note that $C_F=k((t))=C_{F_i^\circ}$ for all $1 \leq i \leq r$ and in particular $C_{F_i}=k((t))=C_F$ for $i=0,\dots,r$. 

\begin{thm}\label{thmpatching} Let $n \in \N$.
For $1 \leq i \leq r$, let $A_i \in F_i^{n\times n}$ be such that there exists a fundamental solution matrix $Y_i \in \GL_n(F_i^\circ)$ for the parameterized $\del$-equation $\del(y)=A_iy$ over $F_i$. Let $\G_i \leq \GL_n$ be the Galois group of $A_i$. Then there exists a parameterized $\del$-equation $\del(y)=Ay$ over $F$ with fundamental solution matrix $Y \in \GL_n(F_0)$ and corresponding Galois group $\G\leq \GL_n$ over $F$ satisfying $\G_i\leq \G$ for each $1\leq i \leq r$. Furthermore, $\G$ is the smallest $\del_t$-closed-subgroup of $\GL_n$ that contains $\G_i$ for all $1\leq i \leq r$. In other words, $\G$ is the Kolchin closure of the group $<\G_1,\dots,\G_r>$ generated by all $\G_i$.
\end{thm}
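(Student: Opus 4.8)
The plan is to build $A$ and $Y$ directly from a simultaneous factorization and then to compare the Galois group over $F$ with those over the $F_i$, exploiting that the gauge matrices live in $\GL_n(F_i)$. First I would apply Theorem \ref{patchingarithmcurves}(b) to $Y_1,\dots,Y_r \in \GL_n(F_i^\circ)$ to obtain $Z_i \in \GL_n(F_i)$ and a single $Y \in \GL_n(F_0)$ with $Y_i = Z_i^{-1}Y$ for all $i$, and set $A := \del(Y)Y^{-1} \in F_0^{n \times n}$. To see that $A$ in fact has entries in $F$, I would rewrite it over each $F_i$: since $Y = Z_iY_i$ and $\del(Y_i)=A_iY_i$, a direct computation gives $A = \del(Z_i)Z_i^{-1} + Z_iA_iZ_i^{-1}$, which lies in $F_i^{n\times n}$ because $Z_i \in \GL_n(F_i)$ and $A_i \in F_i^{n\times n}$. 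Applying Theorem \ref{patchingarithmcurves}(a) entrywise to $A$, which lies in $F_0^{n\times n}$ and in every $F_i^{n\times n}$, then yields $A \in F^{n\times n}$, and $Y \in \GL_n(F_0) \subseteq \GL_n(F_i^\circ)$ is a fundamental solution matrix for $\del(y)=Ay$.

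Next I would produce the Galois group $\G$ over $F$. Setting $E := F\<Y\zu_{\del_t} \subseteq F_0$, the inclusion forces $C_E \subseteq C_{F_0}=k((t))=C_F$, so $E$ is a PPV-extension for $A$; hence $R := F\{Y,Y^{-1}\}_{\del_t}$ is a PPV-ring and $\G=\Gal(A)$ (with respect to $Y$) is defined as in Theorem \ref{thmgalois}. The crucial observation is that over each $F_i$ the matrices $Y$ and $Y_i$ are $\del_t$-equivalent: since $Z_i \in \GL_n(F_i)$ we have $F_i\{Y,Y^{-1}\}_{\del_t}=F_i\{Y_i,Y_i^{-1}\}_{\del_t}=:R_i$, so this ring is simultaneously the PPV-ring of $\del(y)=A_iy$ over $F_i$ (via $Y_i$) and of $\del(y)=Ay$ over $F_i$ (via $Y$). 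As any automorphism fixes $Z_i$, the two resulting embeddings into $\GL_n$ coincide, and the Galois group of $\del(y)=Ay$ over $F_i$ with respect to $Y$ is exactly $\G_i$.

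To obtain $\G_i \le \G$ I would restrict automorphisms. Given a $\del_t$-$C$-algebra $S$ and $\sigma \in \G_i(S)$, i.e.\ a $\del_t\del$-automorphism of $R_i \otimes_C S$ over $F_i \otimes_C S$ with $\sigma(Y\otimes 1)=(Y\otimes 1)g$, the subring $R\otimes_C S$ is generated over $F\otimes_C S$ by the entries of $Y,Y^{-1}$ and their $\del_t$-derivatives; since $\sigma$ fixes $F \otimes_C S$ and $1\otimes S$ (hence the entries of $g$) and commutes with $\del_t$, it maps $R\otimes_C S$ into itself and restricts to an element of $\G(S)$ with the same image $g$. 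This gives $\G_i(S)\subseteq \G(S)$ for all $S$, hence $\G_i \le \G$; equivalently, since $F \subseteq F_i$, the defining $\del_t$-ideal $J$ of $\G$ is contained in that of $\G_i$.

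It remains to identify $\G$ with the Kolchin closure $\mathcal N$ of $\langle \G_1,\dots,\G_r\rangle$. As $\G$ is a $\del_t$-closed subgroup containing every $\G_i$, clearly $\mathcal N \le \G$, and I expect the reverse inclusion to be the main obstacle. Here I would argue via fixed fields: by the parameterized Galois correspondence it suffices to show $E^{\mathcal N}=F$. Let $\tfrac ab \in E$ be functorially invariant under $\mathcal N$; since each $\G_i \le \mathcal N$, it is functorially invariant under $\G_i$, so, viewing it inside $\operatorname{Frac}(R_i)$ and applying Theorem \ref{thmgalois}(b) over $F_i$, one gets $\tfrac ab \in F_i$ for every $i$. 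As also $\tfrac ab \in E \subseteq F_0$, Theorem \ref{patchingarithmcurves}(a) gives $\tfrac ab \in F$. Thus $E^{\mathcal N}=F=E^{\G}$, and injectivity of the Galois correspondence forces $\mathcal N = \G$. The delicate points to check carefully are that functorial $\mathcal N$-invariance restricts to functorial $\G_i$-invariance compatibly with the inclusion $R \subseteq R_i$, and that the invariance computed in $R_i\otimes_C S$ agrees with the one in $R\otimes_C S$ for elements of $R$; both follow from the restriction argument above.
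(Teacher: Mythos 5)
Your proposal is correct and follows essentially the same route as the paper: construct $Y$ and $A$ via the simultaneous factorization, verify $A\in F^{n\times n}$ entrywise with part (a) of Theorem \ref{patchingarithmcurves}, deduce $\G_i\le\G$ from the gauge equivalence $Y=Z_iY_i$ over $F_i$, and prove minimality by showing the fixed field of the Kolchin closure is $F$ via Theorem \ref{thmgalois}(b) and patching. The only cosmetic difference is that you establish $\G_i\le\G$ by restricting automorphisms of $R_i\otimes_C S$ to $R\otimes_C S$, while the paper phrases the same fact through the containment of defining $\del_t$-ideals $J_i\supseteq\{f(Z_iX)\mid f\in J\}$.
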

\begin{proof} We abbreviate $C=C_F=k((t))$. We apply Part b) of Theorem \ref{patchingarithmcurves} and obtain matrices $Z_i \in \GL_n(F_i)$ for $1\leq i \leq r$ and $Y \in \GL_n(F_0)$ such that $Y_i=Z_i^{-1}Y$ holds for each $1 \leq i \leq r$. Consider $A:=\del(Y)Y^{-1} \in F_0^{n\times n}$. For each $1 \leq i \leq r$, we can consider $A$ as an element in $(F_i^\circ)^{n\times n}$ and compute $A=\del(Y)Y^{-1}=\del(Z_iY_i)\cdot (Z_iY_i)^{-1}=\del(Z_i)Z_i^{-1}+Z_iA_iZ_i^{-1} \in F_i^{n\times n}$. A coefficient-wise application of Part a) of Theorem \ref{patchingarithmcurves} now implies that $A$ is contained in $F^{n\times n}$. Let $\G \leq \GL_n$ denote the Galois group of $\del(y)=Ay$ over $F$ corresponding to the fundamental solution matrix $Y \in \GL_n(F_0)$.  \\
\\
Let $S$ be a ${\del_t}$-algebra over $C$. Then by Theorem \ref{thmgalois}\,\ref{galois1}), $$\G(S)=\{g \in \GL_n(S) \ | \ f(Yg)=0 \text{ for all }f \in J \ \}$$ with $J=\{ f \in F\{X,X^{-1}\}_{\del_t} \ | \ f(Y)=0\}$ and
$$\G_i(S)=\{g \in \GL_n(S) \ | \ f(Y_ig)=0 \text{ for all } f\in J_i \ \}$$ with $J_i=\{ f \in F_i\{X,X^{-1}\}_{\del_t} \ | \ f(Y_i)=0\}\supseteq \{f(Z_iX) \ | \ f \in J \}.$\\
Hence for any $g \in \G_i(S)$ and for all $f \in J$ we have $f(Z_iX)(Y_ig)=0$.\\ We compute $f(Z_iX)(Y_ig)=f(Yg)$ and conclude that $g$ is contained in $\G(S)$. Therefore, $\G_i(S) \leq \G(S)$ holds for all $1 \leq i \leq r$. \\
\\
Let $\Hcal \leq \G$ be the smallest ${\del_t}$-closed subgroup containing $\G_i$ for all $1\leq i \leq r$. We claim that $\Hcal=\G$. Let ${R=F\{Y,Y^{-1}\}_{\del_t}}\subseteq {F\<Y\zu_{\del_t}=E}$ be the PPV-ring and the PPV-extension for $A$ over $F$. Similarly, let $R_{i}=F_{i}\{Y_{i},Y_{i}^{-1}\}_{\del_t}\subseteq F_i\<Y_i\zu_{\del_t}=E_i$ be the PPV-ring and the PPV-extension for $A_{i}$ over $F_i$, $1\leq i \leq r$. Consider an element $x \in E^\Hcal$, i.e., $x$ is functorially invariant under $\Hcal$. This means that we can write $x=\frac{a}{b}$ with $a,b \in R$ such that for all ${\del_t}$-$C$-algebras $S$ and for all $\sigma \in \Hcal(S)$ we have 
$\sigma(a\otimes_C 1)\cdot(b\otimes_C 1)=(a \otimes_C 1)\cdot \sigma(b\otimes_C 1)$. Note that $R=F\{Y,Y^{-1}\}_{\del_t}\subseteq F_{i}\{Y,Y^{-1}\}_{\del_t}=F_{i}\{Y_{i},Y_{i}^{-1}\}_{\del_t}=R_{i}$. Hence for all $1\leq i \leq r$, $x$ is contained in $E_{i}$ and is functorially invariant under $\G_{i}\leq \Hcal$. (Here we use that the embedding $\G_i\leq \G$ is compatible with the action of $\G$ on $E$ for all $1\leq i \leq r$. Indeed, the action of an element $g \in \G_i(S)$ is given by $Y_i\otimes 1 \mapsto (Y_i\otimes 1)\cdot g$ which translates to $Y\otimes 1 \mapsto (Y\otimes 1)\cdot g$ inside $\G(S)$, since $Y=Z_iY_i$.) It now follows from Part \ref{galois3}) of Theorem \ref{thmgalois} that $x$ is contained in $F_{i}$ for all $1\leq i \leq r$. Note that $E\subseteq F_{0}$ since $Y \in \GL_n(F_{0})$. Hence $x$ is also an element of $F_{0}$. Part a) of Theorem \ref{patchingarithmcurves} implies $x \in F$. Therefore, $E^\Hcal=F=E^\G$ and the Galois correspondence (see \cite[Proposition 8.5]{Gilletetc}) implies $\Hcal=\G$.
\end{proof}

\begin{rem}
Theorem \ref{thmpatching} can be generalized to fields with more than one parameter, as long as simultaneous factorization as in Theorem \ref{patchingarithmcurves} still holds. An example of such a field is $F=k((t))(x)$ with $k=k'(t_1,\dots, t_m)$ or any other parameterized field and with fields $F \subseteq F_i, F_0 \subseteq F_i^\circ$ defined as in the beginning of this section. Then Theorem \ref{thmpatching} also holds for the parameterized Galois groups with respect to $\Delta=\{\del_t,\del_{t_1},\dots,\del_{t_m}\}$.
\end{rem}

\section{Generating Kolchin-dense subgroups}\label{3}
In this section, we consider linear differential groups over $(k((t)),\del_t)$ with $k$ a fixed field of $\operatorname{char}(k)=0$. An imporant fact from differential algebra is that every differential field is contained in a differentially closed field $U$, which means that any set of $\del_t$-polynomial equations that has a solution in some $\del_t$-extension field of $U$ also has a solution inside $U$. We fix a differentially closed field $U$ containing $k((t))$. The following proposition generalizes the fact that a semisimple linear algebraic group is generated by its root subgroups.

\begin{prop}\label{propgen}
Let $\Hcal \leq \GL_n$ be a semisimple connected linear algebraic group that is defined over $k((t))$ and is $k((t))$-split. Fix a  a root system $\Phi$ of $\Hcal$ with $\Phi^+\subseteq \Phi$ a system of positive roots, $\mathcal{D} \subseteq \Phi^+$ a set of simple roots, $U_\alpha$ root subgroups defined over $k((t))$ and $u_\alpha \colon \mathbb{G}_a \to U_\alpha$ isomorphisms over $k((t))$. Then the following holds: Every $\del_t$-linear subgroup $\G \leq \Hcal$ defined over $k((t))$ (in other words, every Kolchin-closed subgroup defined over $k((t))$) with the following properties a) and b) equals $\Hcal$. 
\begin{enumerate}
 \item for all $\alpha \in \mathcal{D} $: $u_{\pm \alpha}(\pm 1) \in \G(k((t)))$
\item for all $\alpha \in \Phi^+$, there exists an $f_\alpha \in k((t))$ transcendent over $k$ with $u_\alpha(f_\alpha) \in \G(k((t)))$ and $u_{-\alpha}(-f_\alpha^{-1}) \in \G(k((t)))$
\end{enumerate}
\end{prop}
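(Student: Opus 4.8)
The plan is to reduce everything to a statement about full root subgroups and to exploit the transcendence hypothesis in b) through the field of constants. First I would fix a differentially closed $\del_t$-field $U \supseteq k((t))$; since two Kolchin-closed subgroups defined over $k((t))$ coincide as soon as their $U$-points agree, it suffices to prove $\G(U)=\Hcal(U)$. Recall that a differentially closed field is algebraically closed, and write $\mathcal{C}=U^{\del_t}$ for its field of constants, so that $k \subseteq \mathcal{C}$. The key elementary observation is that any $f_\alpha$ as in b) is transcendental over $\mathcal{C}$: since $k((t))^{\del_t}=k$ and $f_\alpha$ is transcendental over $k$, we have $\del_t(f_\alpha)\neq 0$, i.e. $f_\alpha\notin\mathcal{C}$; and in characteristic zero an element of $U$ that is algebraic over $\mathcal{C}$ must itself be a constant (differentiate its minimal polynomial). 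Hence $1,f_\alpha,f_\alpha^2,\dots$ are $\mathcal{C}$-linearly independent.

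Next I would carry out a rank-one construction inside each $\langle U_\alpha,U_{-\alpha}\rangle$ for $\alpha\in\mathcal{D}$, using the $\SL_2$-type Chevalley relations in the chosen coordinates $u_{\pm\alpha}$. From the generators supplied by a) and b), the products $w_\alpha(1)=u_\alpha(1)u_{-\alpha}(-1)u_\alpha(1)$ and $w_\alpha(f_\alpha)=u_\alpha(f_\alpha)u_{-\alpha}(-f_\alpha^{-1})u_\alpha(f_\alpha)$ lie in $\G$, and therefore so does the torus element $h_\alpha(f_\alpha)=w_\alpha(f_\alpha)w_\alpha(1)^{-1}$, whose value of $\alpha$ is $f_\alpha$. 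Conjugating $u_{\alpha}(1)$ (resp. $u_{-\alpha}(1)=u_{-\alpha}(-1)^{-1}$) by powers of $h_\alpha(f_\alpha)$ rescales the root-group parameter by $f_\alpha^{2}$ (resp. $f_\alpha^{-2}$), so together with the elements of b) this produces $u_{\pm\alpha}(f_\alpha^{\,j})\in\G$ for all $j\ge 0$.

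The crux of the argument — and the step I expect to be the main obstacle — is upgrading these infinitely many individual points to the full root subgroup. For each simple $\alpha$ the intersection $\G\cap U_{\pm\alpha}$ is a Kolchin-closed subgroup of $\mathbb{G}_a$, hence either the solution set of a nonzero linear $\del_t$-operator (a finite-dimensional $\mathcal{C}$-vector space) or all of $\mathbb{G}_a$. Since it contains the parameters $\{f_\alpha^{\,j}\}$, which are $\mathcal{C}$-linearly independent by the transcendence of $f_\alpha$ over $\mathcal{C}$, the finite-dimensional alternative is impossible, so $U_{\pm\alpha}\subseteq\G$ as a full subgroup. This is precisely the point at which the non-constancy of the $f_\alpha$ is indispensable; alternatively one can phrase this last implication through Cassidy's classification, where the presence of a non-constant root-group element excludes every proper Zariski-dense Kolchin-closed subgroup and forces $\G=\Hcal$ directly.

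Finally I would propagate from simple roots to all of $\Hcal$. Condition a) gives the Weyl representatives $w_\alpha\in\G$, and every $\beta\in\Phi$ is $W$-conjugate to a simple root, so conjugating the full subgroups $U_{\pm\alpha}$ by suitable products of the $w_\alpha$ yields $U_\beta\subseteq\G$ for every $\beta\in\Phi$. Because $U$ is algebraically closed, a connected semisimple group is generated \emph{as an abstract group} by the $U$-points of its root subgroups (the subgroup generated by closed connected subgroups is already closed and connected, so it equals $\Hcal$), and this bypasses any isogeny subtleties that would arise over a non-closed field. Therefore $\G(U)=\Hcal(U)$, which gives $\G=\Hcal$. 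I note that in this line of argument only the simple-root instances of a) and b) are actually used, the remaining positive roots in b) being needed for the alternative route through Cassidy's theorem.
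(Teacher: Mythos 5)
Your proposal is correct, but it follows a genuinely different route from the paper. The paper first reduces to the case where $\Hcal$ is defined over $\Q$, splits $\Hcal$ into its quasisimple components $\Hcal_i$, and then invokes Cassidy's classification (Theorems 15 and 19 of \cite{Cassidyclassification}) to conclude that each $\G_i=(\Hcal_i\cap\G)^\circ$ is either all of $\Hcal_i$ or a conjugate $g_i^{-1}\Hcal_i(\text{constants})g_i$; the latter is excluded because the torus element $\alpha^\vee(f_\alpha)=u_\alpha(f_\alpha)u_{-\alpha}(f_\alpha^{-1})u_\alpha(f_\alpha)n_\alpha^{-1}\in\G$ would then have eigenvalues in $\overline{k}$, contradicting the transcendence of $f_\alpha$. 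You build exactly the same element $h_\alpha(f_\alpha)=\alpha^\vee(f_\alpha)$ from the Chevalley relation, but instead of classifying Zariski-dense proper Kolchin-closed subgroups of a quasisimple group, you use it to conjugate the root-group elements and produce the parameters $f_\alpha^{\,j}$ inside $\G\cap U_{\pm\alpha}$; the only classification you then need is the elementary one for $\mathbb{G}_a$ (a proper Kolchin-closed subgroup is the kernel of a nonzero linear $\del_t$-operator, hence a finite-dimensional vector space over the constants of $U$ --- this is also due to Cassidy and should be cited), after which Weyl conjugation by the $n_\alpha\in\G$ and abstract generation of $\Hcal(U)$ by its root subgroups over the algebraically closed field $U$ finish the argument. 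Your version buys several things: it avoids both Cassidy's Theorem 19 and the reduction to $\Q$-defined groups and the quasisimple decomposition; it makes the role of the transcendence of $f_\alpha$ completely transparent (it forces $\G\cap U_\alpha$ out of the finite-dimensional regime); and, as you note, it only uses the simple-root instances of hypotheses a) and b), so it in fact establishes a formally stronger statement. The paper's route is shorter once Theorem 19 is taken as given and needs only one root per quasisimple factor in step b). Both arguments share the convention of identifying a Kolchin-closed group with its $U$-points via the differential Nullstellensatz, so your reduction to $\G(U)=\Hcal(U)$ is consistent with the paper's own usage. I see no gap.
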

\begin{proof}
Assume first that $\Hcal$ is defined over $\Q$. Let further $\Hcal_1,\dots,\Hcal_r$ denote the quasisimple components of $\Hcal$. 
 Let $\G \leq \Hcal$ be a group as described in the theorem.
%quasisimple=keine zshgd NT  \\
We set $\G_i=(\Hcal_i\cap \G)^\circ$ for $1 \leq i \leq r$. Note that $u_\alpha(1)$ generate Zariski-dense subgroups of $U_\alpha$, hence the Zariski closure of $\G$ contains $U_{\pm \alpha}$ for all $\alpha \in \mathcal{D}$ and thus equals $\Hcal$. Then by Theorem 15 in \cite{Cassidyclassification}, $\G_i$ is Zariski dense in $\Hcal_i$ and $\G$ is an almost direct product of $\G_1,\dots, \G_r$. Hence it suffices to show that $\G_i=\Hcal_i$ holds for all $i \leq r$. Let $i \leq r$. Theorem 19 in \cite{Cassidyclassification} implies that either $\Hcal_i=\G_i$ holds or that there exists a $g_i \in \Hcal_i(U)$ such that  
$\G_i(S)=g_i^{-1}\cdot\{h \in \Hcal_i(S) \ | \ \del_t(h)=0 \}\cdot g_i$ holds for all $\del_t$-$C$-algebras $S$ containing $\overline{ \Q}$. \\
%(More precisely, we apply this Theorem to a $\Q$-group $\tilde \Hcal_i$ which splits over $\Q$ and is isomorphic to $\Hcal_i$ over $\overline{\Q}$.)%wir benutzen, dass alle \Hcal_i isomorph zu Chevalley gruppe (ueber \overline{Q}), wobei Chevalley gruppe definiert als ueber Q zerfallend. also Bilder dort gleich oder konjugiert, also auch in \Hcal_i
%(The theorem more generally states that in the partial case with a finite set of derivations $\Delta$, $\G_i$ is conjugate to the subgroup of $\Hcal_i$ defined as the constants of a finite set of derivations that are contained in the $U$-span of $\Delta$.)
\\
Assume that $\G_i\neq \Hcal_i$ holds.  Note that $\Hcal_i$ is semisimple with root system $\Phi_i=\{\alpha \in \Phi \ | \ U_\alpha \subseteq \Hcal_i\}\neq \varnothing$. Let $\alpha \in \Phi_i$. By assumption, $u_\alpha(f_\alpha)u_{-\alpha}(f_\alpha^{-1})u_{\alpha}(f_{\alpha}) \in (\G\cap \Hcal_i)(k((t)))$. Now by  \cite[Lemma 8.1.4]{Springer}, \[u_\alpha(f_\alpha)u_{-\alpha}(f_\alpha^{-1})u_\alpha(f_\alpha)=\alpha^\vee(f_\alpha)\cdot n_\alpha,\] with $\alpha^\vee$ a coroot corresponding to $\alpha$ and $n_\alpha=u_\alpha(1)u_{-\alpha}(-1)u_\alpha(1)$ also an element of $(\G\cap \Hcal_i)(k((t)))$. Hence $\alpha^\vee(f_\alpha) \in (\G\cap \Hcal_i)(k((t)))$, so there exists an $l \in \N_{\geq 1}$ with $\alpha^\vee(f_\alpha)^l=\alpha^\vee(f_\alpha^l) \in \G_i(k((t)))\subseteq g_i^{-1}\Hcal_i(\overline{k})g_i$ and we conclude that all eigenvalues of $\alpha^\vee(f_\alpha^l)$ are contained in $\overline{k}$ which implies $f_\alpha^l \in \overline{k}$, contradicting $f_\alpha \notin \overline{k}$.  Hence $\G_i=\Hcal_i$. 
%For the last implication: Let $T$ be the $k((t))$-split torus we are considering. And let g \in \GL_n(\overline{k((t))}$ be such that $T^g \subseteq D_n$ (diagonal matrices inside GL_n$. Then the composition of $\alpha^\vee$ with this conjugation is given by a exponentiating with an integer in every component. Hence $f_\alpha^l$, the preimage of something $k$-rational, is $\overline{k}$-rational. 
\\
\\
The classification of reductive groups implies that every $k((t))$-split reductive group is isomorphic to one defined over $\Q$.
Now for a general $\Hcal$ that splits over $k((t))$, let $\gamma \colon \Hcal \to \tilde \Hcal$ be a $k((t))$-isomorphism with $\tilde \Hcal$ defined over $\Q$. The $k((t))$-split root systems $\tilde \Phi$ (i.e., all $\alpha$, $U_\alpha$ and $u_\alpha$ are defined over $k((t))$) of $\tilde \Hcal$ associated to a $k((t))$-split maximal torus $\tilde T\leq \tilde \Hcal$ are in one-to-one correspondence with the $k((t))$-split root systems $\Phi$ of $\Hcal$ associated to the maximal torus $\gamma^{-1}(\tilde T)$ (via $\Phi=\{\tilde \alpha\circ\gamma \ | \ \tilde \alpha \in \tilde \Phi\}$, $U_{\tilde \alpha\circ\gamma}=\gamma^{-1}(U_{\tilde \alpha})$ and $u_{\tilde \alpha\circ\gamma}=\gamma^{-1}\circ u_{\tilde \alpha}$ ). Let $\G \leq \Hcal$ be a Kolchin-closed subgroup defined over $k((t))$ such that a) and b) holds. Then $\gamma(\G) \leq \tilde \Hcal$ satisfies a) and b) with respect to the root system $\tilde \Phi$, hence $\gamma(\G)=\tilde \Hcal$ by what we proved above and we conclude $\G=\Hcal$.

\end{proof}

\section{Semisimple linear algebraic groups as parameterized differential Galois groups}\label{4}
Let again $k$ be a field of $\operatorname{char}(k)=0$ and consider $k((t))(x)$ equipped with the natural derivations $\del=\frac{\del}{\del x}$ and $\del_t=\frac{\del}{\del t}$.
\begin{lem}\label{lemmaschnitt}
Let $q \in k$. Then $$\operatorname{Frac}(k[[t]][[x-q]])\cap \operatorname{Frac}(k[(x-q)^{-1}][[t]])=k((t))(x),$$ where the intersection is considered inside $k((x-q))((t))$.
\end{lem}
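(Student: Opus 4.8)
The inclusion $\supseteq$ is immediate. Both rings $k[[t]][[x-q]]$ and $k[(x-q)^{-1}][[t]]$ contain $k[[t]]$ and the element $x = q + (x-q)$ (the latter ring because $x-q = ((x-q)^{-1})^{-1}$), so both fraction fields contain $k((t))(x)$; moreover the two inclusions into $k((x-q))((t))$ agree on $k((t))(x)$. Thus $k((t))(x)$ is contained in the intersection, and the whole content of the lemma is the reverse inclusion.

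The plan is to recognize the reverse inclusion as the one-point instance of Part a) of Theorem \ref{patchingarithmcurves}. Concretely, I would take $r = 1$ and $q_1 = q$ in the Setup preceding that theorem. With these choices the fields defined there become $F_1 = \operatorname{Frac}(k[[t]][[x-q]])$, $F_0 = \operatorname{Frac}(k[(x-q)^{-1}][[t]])$, $F_1^\circ = k((x-q))((t))$ and $F = k((t))(x)$, so the diagram $F \subseteq F_0, F_1 \subseteq F_1^\circ$ is exactly the one inside which our intersection is formed. Part a) asserts that any element of $F_0$ which, viewed inside $F_1^\circ$, already lies in $F_1$ must lie in $F$; this is precisely $F_0 \cap F_1 \subseteq k((t))(x)$. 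Combined with the first paragraph, this proves the lemma, and no work is needed beyond checking that the single-point specialization of the Setup reproduces the rings appearing here (the hypothesis that the $q_i$ be pairwise distinct is vacuous for $r = 1$).

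I expect the only genuine difficulty to be this reverse inclusion, which is exactly why I would route it through Theorem \ref{patchingarithmcurves} rather than argue by hand. To indicate where the obstacle sits, one may expand an element $z$ of the intersection $t$-adically as $z = \sum_{j \geq j_0} c_j(s)\, t^j$ with $s = x-q$ and $c_j \in k((s))$. Membership in $F_0$ forces each $c_j$ to be a rational function of $s$ with poles confined to a fixed finite set of values of $s$, while membership in $F_1$ reflects regularity near $s = 0$; in both cases, however, the naive bounds on the pole orders may grow with $j$. The crux is to promote these two local statements into a \emph{single} denominator $Q(s) \in k((t))[s]$ clearing $z$ for all $j$ simultaneously, since that is what places $z$ in $k((t))(s) = k((t))(x)$. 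This uniform-denominator phenomenon is precisely the content supplied by Harbater and Hartmann's result, so the citation is the economical path.
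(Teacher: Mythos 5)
Your proposal is correct and follows exactly the paper's own argument: the paper proves this lemma by citing Theorem \ref{patchingarithmcurves}.a) with $r=1$, which is precisely your reduction (the specialization $q_1=q$ of the Setup does reproduce the rings in the statement, and the easy inclusion $\supseteq$ is as you say). The additional discussion of the uniform-denominator obstruction is a reasonable gloss but not needed, since the citation carries the full weight.
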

\begin{proof}
This follows directly from Theorem \ref{patchingarithmcurves}.a) with $r=1$.
\end{proof}

\begin{thm}\label{result}
Let $\Hcal \leq \GL_n$ be a connected semisimple linear algebraic group defined and split over $k((t))$. Then $\Hcal$ is the parameterized Galois group of some $n$-dimensional $\del$-differential equation over $k((t))(x)$.
\end{thm}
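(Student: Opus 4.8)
The plan is to realize $\Hcal$ by patching together finitely many one-dimensional parameterized equations, one for each root-subgroup element demanded by Proposition \ref{propgen}, and then to quote that proposition. Fix the split root datum $\Phi$, $\Phi^+$, $\mathcal{D}$, $U_\alpha$, $u_\alpha$ of $\Hcal$ and put $f_\alpha := t$ for every $\alpha \in \Phi^+$ (transcendent over $k$). By Proposition \ref{propgen} it suffices to produce a parameterized $\del$-equation over $F$ whose Galois group $\G \leq \GL_n$ is a $\del_t$-subgroup of $\Hcal$ defined over $k((t))$ such that $\G(k((t)))$ contains $u_{\pm\alpha}(\pm 1)$ for every $\alpha \in \mathcal{D}$ and $u_\alpha(t)$, $u_{-\alpha}(-t^{-1})$ for every $\alpha \in \Phi^+$. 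By Theorem \ref{thmpatching} it is enough to realize each of these finitely many elements as a $k((t))$-point of the Galois group $\G_i$ of an equation over a local field $F_i$ admitting a fundamental solution matrix in $\GL_n(F_i^\circ)$: the patched group $\G$ is then the Kolchin closure of $\langle \G_1, \dots, \G_r\rangle$, it is contained in $\Hcal$ because each $\G_i$ sits inside a root subgroup, and Proposition \ref{propgen} upgrades this to $\G = \Hcal$.

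For the local pieces I would work at a point $q_i \in k$ with the $t$-shifted logarithm
\[ \ell_i := \log\Bigl(1 - \tfrac{t}{x-q_i}\Bigr) = -\sum_{m\ge1}\tfrac{1}{m}(x-q_i)^{-m}t^{m} \in F_i^\circ, \]
which is transcendent over $F_i$ yet has rational partial derivatives
\[ \del\,\ell_i = \frac{t}{(x-q_i)\bigl((x-q_i)-t\bigr)} \in F_i, \qquad \del_t\,\ell_i = \frac{1}{t-(x-q_i)} \in F_i. \]
Given a root $\beta$ with root vector $X_\beta$ and a solution coordinate $y_i \in \{\ell_i,\ t\ell_i,\ t^{-1}\ell_i\}$, I set $A_i := (\del y_i)X_\beta \in F_i^{n\times n}$, for which $u_\beta(y_i) \in \GL_n(F_i^\circ)$ is a fundamental solution matrix; hence $\G_i$ is a $\del_t$-subgroup of $U_\beta \cong \mathbb{G}_a$. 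As $E_i = F_i(\ell_i)$ is a single logarithmic extension one checks $C_{E_i} = C_{F_i} = k((t))$, so $\G_i$ is defined. Computing the minimal operator $L \in k((t))[\del_t]$ with $L(y_i) \in F_i$ gives $L = \del_t$, $\del_t - t^{-1}$, $\del_t + t^{-1}$ in the three cases, so that $\G_i(k((t)))$ equals $\{u_\beta(c) : c \in k\}$, $\{u_\beta(ct) : c \in k\}$ and $\{u_\beta(ct^{-1}) : c \in k\}$ respectively. Thus $\beta = \pm\alpha$, $y_i = \ell_i$ realizes $u_{\pm\alpha}(\pm1)$, while $\beta = \alpha$, $y_i = t\ell_i$ realizes $u_\alpha(t)$ and $\beta = -\alpha$, $y_i = t^{-1}\ell_i$ realizes $u_{-\alpha}(-t^{-1})$.

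Running this over all simple and positive roots consumes $r = 2|\mathcal{D}| + 2|\Phi^+|$ distinct points $q_1, \dots, q_r \in k$, which exist since $k$ is infinite; patching the resulting equations by Theorem \ref{thmpatching} and invoking Proposition \ref{propgen} then completes the proof. The crux---and the only genuinely delicate step---is the local realization. Patching forces each fundamental solution matrix into $F_i^\circ = k((x-q_i))((t))$, which rules out the honest logarithm $\log(x-q_i)$ one would naively use to build a $\mathbb{G}_a$; the point is that the shifted logarithm $\ell_i$ does lie in $F_i^\circ$ and has both partials in $F_i$, and that its $t$-power multiples cut out precisely the small $\del_t$-subgroups of $\mathbb{G}_a$ whose $k((t))$-points are $k$, $kt$ and $kt^{-1}$. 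It is exactly the transcendence of $t$ and $t^{-1}$ over $k$ that is needed to exclude the ``conjugate to the constant points'' alternative of Cassidy's theorem in the proof of Proposition \ref{propgen}, which is why $f_\alpha$ must be chosen transcendent.
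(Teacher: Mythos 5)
Your proposal is correct and follows essentially the same route as the paper: the same reduction via Theorem \ref{thmpatching} and Proposition \ref{propgen} with $f_\alpha = t$, and the same local building blocks, namely the shifted logarithm $\log\bigl(1 \mp t/(x-q_i)\bigr) \in F_0$ and its multiples by $t^{\pm 1}$, yielding $\del_t$-subgroups of the root groups whose $k((t))$-points are $u_\beta(k)$, $u_\beta(kt)$ and $u_\beta(kt^{-1})$. The only differences are cosmetic (a sign convention in the logarithm, using $2|\mathcal{D}|+2|\Phi^+|$ patches instead of the paper's $4|\Phi^+|$, and phrasing the local Galois group computation via the minimal $\del_t$-operator rather than the paper's explicit automorphism calculation).
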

\begin{proof}
Let $\Phi$ be a set of roots of $\Hcal$ defined over $k((t))$ and fix a set of positive roots ${\Phi^+=\{\alpha_1,\dots,\alpha_m\}} \subseteq \Phi$. We also fix root subgroups $U_\alpha \leq \Hcal$ and $k((t))$-isomorphisms $u_\alpha \colon \mathbb{G}_a \to U_\alpha$ for each $\alpha \in \Phi=-\Phi^+\cup\Phi^+$. Fix pairwise distinct elements $q_1,\dots,q_{4m} \in k$. We set $F=k((t))(x)$ and $r=4m$. Consider the $\del\del_t$-fields $F\subseteq F_0, F_i \subseteq F_i^{\circ}$ ($1\leq i \leq r$) as defined in Section \ref{secpat}. We further let $C=k((t))$ be the field of $\del$-constants of these fields.\\
\\
We will construct $A_i \in F_i^{n\times n}$ for $1\leq i \leq r$ such that the $\del$-equations $\del(y)=A_iy$ over $F_i$  have fundamental solution matrices $Y_i \in \GL_n(F_0)\subseteq \GL_n(F_i^\circ)$ and parameterized Galois groups $\G_i \leq \GL_n$ satisfying the following. For all $1 \leq i \leq m$ and all $\del_t$-$C$-algebras $S$ the following holds:
\begin{eqnarray}
\G_i(S)&=&U_{\alpha_i}(S^{\del_t}) \label{arraypatches1} \\
\G_{m+i}(S)&=&U_{-\alpha_i}(S^{\del_t})\\
\G_{2m+i}(S)&=&U_{\alpha_i}(S^{\del_t}\cdot t) \\
\G_{3m+i}(S)&=&U_{-\alpha_i}(S^{\del_t}\cdot t^{-1}) \label{arraypatches4}
\end{eqnarray}
In other words, $\G_i \leq U_{\alpha_i}$ and $\G_{m+i} \leq U_{-\alpha_i}$ are the constant subgroup schemes and $\G_{2m+i}=u_{\alpha_i}(G_i)$ with $G_i \leq \mathbb{G}_a$ the $\del_t$-subgroup given by the equation $\del_t(X)-\frac{1}{t}X=0$ and similarly $\G_{3m+i}=u_{-\alpha_i}(\tilde G_i)$ with $ \tilde G_i \leq \mathbb{G}_a$ the $\del_t$-subgroup given by the equation $\del_t(X)+\frac{1}{t}X=0$.\\
Theorem \ref{thmpatching} will then imply that there is an $A \in F^{n\times n}$ such that the parameterized Galois group $\G \leq \GL_n$ of the $\del$-equation $\del(y)=Ay$ over $F$ is the smallest $\del_t$-subgroup of $\GL_n$ containing $\G_i$ for $1\leq i \leq 4m$. Then $\G=\Hcal$ holds by Proposition \ref{propgen} (applied to $f_\alpha=t$ for all $\alpha$).\\
\\
For $1\leq i \leq 4m$, set $f_i:=\sum\limits_{n=1}^{\infty}\frac{(-1)^{n+1}}{n(x-q_i)^n}t^n \in F_0$. Then $$\del(f_i)=\frac{1}{x-q_i}\sum\limits_{n=1}^{\infty}\left(\frac{-t}{x-q_i}\right)^n=\frac{1}{x-q_i+t}-\frac{1}{x-q_i}=:a_i \in F$$
and $$\del_t(f_i)=\frac{-1}{t}\sum\limits_{n=1}^{\infty}\left(\frac{-t}{x-q_i}\right)^n=\frac{1}{x-q_i+t} \ \in F.$$
For $1 \leq i \leq 4m$, we define $E_i=F_i\<f_i\zu_{\del_t}=F_i(f_i)$. Then $E_i$ is a PPV-extension for the $\del$-equation over $F_i$ given by $\tilde A_i=\begin{pmatrix}
0& a_i\\
0 &0                                                                                                                               \end{pmatrix}$ with fundamental solution matrix $\tilde Y_{i}=\begin{pmatrix}
1& f_i\\
0 &1                                                                                                                               \end{pmatrix} \in \GL_2(F_0)$ and PPV-ring $R_i=F_i\{\tilde Y_{i}, \tilde Y_{i}\}_{\del_t}=F_i\{f_i\}_{\del_t}=F_i[f_i]$.
Let $S$ be a $\del_t$-$C$-algebra. Then an $(F\otimes_C S)$-automorphism $\sigma$ of $R_i\otimes_C S=(F_i\otimes_C S)[f_i\otimes 1]$ is given by $\sigma(f_i\otimes 1)$ and $\sigma$ commutes with $\del$ and $\del_t$ if and only if $\sigma(f_i\otimes 1)=f_i\otimes 1+a_\sigma$ with $\del(a_\sigma)=\del_t(a_\sigma)=0$ which is the case if and only if $a_\sigma \in ((R\otimes_C S)^{\del})^{\del_t}=S^{\del_t}$. Hence $\Gal(\tilde A_i)$ is a subgroup scheme of the constant subscheme of $\mathbb{G}_a$. As $f_i \notin F$, Lemma \ref{lemmaschnitt} implies that $f_i \notin F_i$, so $\Gal(\tilde A_i)$ is not trivial. Hence $\Gal(\tilde A_i)$ equals the constant subscheme of $\mathbb{G}_a$, i.e., for every $a \in S^{\del_t}$ there exists a (unique) $\sigma_a \in \Aut^{\del_t\del}(R_i\otimes_C S/F\otimes_C S)$ with $\sigma_a(f_i\otimes 1)=f_i\otimes 1+a$. We conclude $\Aut^{\del_t\del}(R_i\otimes_C S/F\otimes_C S)=\{ \sigma_a \ | \ a \in S^{\del_t} \}$. \\
\\
For $1 \leq i \leq m$, we now set 
\begin{eqnarray*}
Y_i&=&u_{\alpha_i}(f_i) \in \GL_n(R_i) \\
Y_{m+i}&=&u_{-\alpha_i}(f_{m+i}) \in \GL_n(R_{m+i}) \\
Y_{2m+i}&=&u_{\alpha_i}(t\cdot f_{2m+i}) \in \GL_n(R_{2m+i})\\
Y_{3m+i}&=&u_{-\alpha_i}(t^{-1} \cdot f_{3m+i}) \in \GL_n(R_{3m+i}) 
\end{eqnarray*}
and we set $A_i=\del(Y_i)Y_i^{-1} \in R_i^{n\times n}$ for $1\leq i \leq 4m$. We use that all $u_{\alpha_i}$'s are defined over $k((t))$ to see that the entries of $A_i$ are functorially invariant under $\Gal(\tilde A_i)$, so we actually have $A_i \in F_i^{n\times n}$ (see Part \ref{galois3}) of Theorem \ref{thmgalois}). As $u_{\alpha_i}$ is a $k((t))$-isomorphism, we have $F_i\<Y_i, Y_i^{-1} \zu_{\del_t}=F_i\<f_i\zu_{\del_t}=E_i$. Hence $E_i$ is a PPV-extension for $A_i$ and $R_i$ is a PPV-ring for $A_i$ ($1\leq i \leq 4m$). Let $\G_i \leq \GL_n$ be the image of $\Gal(A_i)$ with respect to the embedding corresponding to $Y_i$ (see Theorem \ref{thmgalois}\,\ref{galois1}). We claim that 
(\ref{arraypatches1})-(\ref{arraypatches4}) holds. Let $1\leq i \leq 4m$, $S$ a $\del_t$-$C$-algebra and $a \in S^{\del_t}$. Now $u_{\alpha_i} \colon \mathbb{G}_a\to U_{\alpha_i}$ is defined over $k((t))=C$, hence it commutes with $\sigma_a$. It follows that 
\[Y_i^{-1}\sigma_a(Y_i)=\begin{cases}
                         u_{\alpha_i}(a) &\mbox{if } 1 \leq i \leq m \\
u_{-\alpha_i}(a) &\mbox{if } m+1 \leq i \leq 2m \\ 
u_{\alpha_i}(t\cdot a) &\mbox{if } 2m+1 \leq i \leq 3m \\ 
u_{-\alpha_i}(t^{-1} \cdot a) &\mbox{if } 3m+1 \leq i \leq 4m \\  
                        \end{cases}
 \] holds and the claim follows.
\end{proof}

\begin{rem}
We expect that patching can also be used to show that other classes of linear differential groups occur as parameterized Galois groups over $k((t))(x)$. There is work in progress to study whether every linear differential group $\G$ defined over $k((t))$ that is differentially finitely generated over $k((t))$ (by which we mean that it can be generated by finitely many $k((t))$-rational elements $g_1,\dots,g_m$) is a parameterized Galois group over $k((t))(x)$. This question seems suitable for an application of patching, since $\G$ is then differentially generated by the differential closures $\G_i$ of $<\!g_i\!>$ for $i \leq m$ and these groups $\G_i$ can be described explicitly. 
\end{rem}

\bibliographystyle{alpha}	
 \bibliography{references}

\end{document}